\pgfplotsset{compat=newest}
\title{Hyperbolicity of Appell Polynomials of Functions in the \(\delta\)-Laguerre-Pólya Class}
\author{Jonas Iskander, Vanshika Jain}
\date{January 2020}
\newcommand{\R}{\mathbb{R}}
\newcommand{\C}{\mathbb{C}}
\newcommand{\LP}{\mathcal{LP}}
\newcommand{\Addresses}{{
  \bigskip
  \footnotesize

  \noindent J.~Iskander, \textsc{2046 Deren Way NE, Atlanta, GA 30345}\par\nopagebreak
  \textit{Email address}: \texttt{jonasiskander@gmail.com}

  \medskip

  \noindent V.~Jain, \textsc{Department of Mathematics, Massachusetts Institute of Technology, Cambridge, MA 02139}\par\nopagebreak
  \textit{Email address}: \texttt{vanshika@mit.edu}

}}
\newtheorem{theorem}{Theorem}[section]
\newtheorem{lemma}[theorem]{Lemma}
\newtheorem{corollary}[theorem]{Corollary}
\theoremstyle{definition}
\begin{document}

\maketitle

\begin{abstract}
We present a method for proving that Jensen polynomials associated with functions in the \(\delta\)-Laguerre-Pólya class have all real roots, and demonstrate how it can be used to construct new functions belonging to the Laguerre-Pólya class. As an application, we confirm a conjecture of Ono, which asserts that the Jensen polynomials associated with the first term of the Hardy-Ramanujan-Rademacher series formula for the partition function are always hyperbolic.
\end{abstract}

\section{Introduction and Statement of Results}

A polynomial of degree \(d\) is called \textit{hyperbolic} if it has \(d\) real roots, counting multiplicity. A function \(f(t)\) is said to be in the Laguerre-Pólya class (denoted \(\LP\)) if there exists a sequence of hyperbolic polynomials converging locally uniformly to \(f(t)\) in \(\C\) \cite{jensen1913}. The \(\LP\) class consists of functions of the form \begin{equation}\label{laguerre-polya-function-form}
    ct^me^{\alpha t - \beta t^2} \prod_{n = 1}^\infty \left(1 - \frac{t}{t_n}\right) e^{\frac{t}{t_n}}
\end{equation} for \(c, \alpha \in \R\), \(\beta \geq 0\), and \(\{t_n\}_1^\infty\) such that \(\sum_{k=1}^\infty |t_k|^{-2}\) converges, where \(\{t_k\}_1^\infty\) is allowed to have finitely many terms \cite{laguerre1898oeuvres}. It turns out that there is a simple criterion for confirming whether a function \(f(t)\) can be written in form \eqref{laguerre-polya-function-form}. Specifically, J.\@ Jensen proved that a function \(f(t)\) belongs to \(\LP\) if and only if for some \(t \in \R\), the \textit{Appell polynomials} \begin{equation}
    A_d^f(t; x) := e^{-xt} D_x^d(e^{xt}f(t)) = \sum_{k=0}^d \binom{d}{k} f^{(k)}(t) x^{d-k} 
\end{equation} are hyperbolic for all \(d \geq 0\) \cite{baricz2018zeros, jensen1913}.

The Laguerre-Pólya class of entire functions was initially studied in the late nineteenth century, but it has been of interest since the twentieth century because of its relation to the Riemann hypothesis. In particular, J.\@ Jensen and G.\@ Pólya proved that the Riemann hypothesis is true if and only if the Riemann xi-function belongs to \(\LP\) \cite{polya1927algebraisch}. As a result, there has been interest in expanding the mathematical toolkit for proving hyperbolicity for the Appell polynomials of functions which are not known to be in the \(\LP\) class. Due to their convenience when studying combinatorial sequences, \textit{Jensen polynomials} of the form \begin{equation}
    J_d^{a,n}(x) := \sum_{k=0}^d \binom{d}{k} a_{n+k}x^k
\end{equation} associated with various \(\{a_n\}_0^\infty\), which are closely related to Appell polynomials, have received much attention.

It is a classical result that if a sequence of polynomials of successive degrees is orthogonal with respect to a positive definite metric, then the polynomials are hyperbolic. Hermite polynomials are an example of such a sequence. P.\@ Turán strengthened this result by showing that if an expansion of a polynomial \(p(x)\) in orthogonal polynomials satisfies a simple criterion, then \(p(x)\) is hyperbolic \cite{turan}. In a recent paper, M.\@ Griffin, K.\@ Ono, L.\@ Rolen, and D.\@ Zagier studied Jensen polynomials of fixed degree associated with sequences satisfying a certain limiting behavior. They found that when suitably renormalized, the Jensen polynomials approach Hermite polynomials and hence eventually become hyperbolic \cite{griffin2019jensen}. In particular, the Taylor coefficients about \(1/2\) of the Riemann xi-function satisfy this condition. Effective bounds for when these Jensen polynomials become hyperbolic in the case of the Riemann xi-function are given in \cite{griffin2019jensenxi}.

The Jensen polynomials associated with the \textit{partition function} \(p(n)\), which is defined by its generating function \begin{equation*}
    \sum_{n=0}^\infty p(n) x^n := \prod_{k=1}^\infty \frac{1}{1-x^k},
\end{equation*} have also received much attention. J.\@ Nicolas \cite{nicolas} and S.\@ DeSalvo and I.\@ Pak \cite{DeSalvo} independently proved that the Jensen polynomials \(J_2^{p,n}(x)\) are hyperbolic for \(n \geq 25\). W.\@ Chen, D.\@ Jia, and L.\@ Wang conjectured that for any fixed degree \(d\), \(J_d^{p,n}(x)\) is hyperbolic for sufficiently large \(n\), and they proved eventual hyperbolicity in the case of \(d = 3\) \cite{chen}. H.\@ Larson and I.\@ Wagner independently gave bounds for arbitrary \(d\) and found the minimal shifts \(n\) after which the polynomials are hyperbolic for \(d \in \{3, 4, 5\}\) \cite{larson}.

An important tool for studying the Jensen polynomials of the partition function has been the Hardy-Ramanujan-Rademacher series expansion, \begin{equation}\label{partition-expansion}
    p(n) = 24 \sum_{k=1}^{\infty} A_k(n) \left(\frac{\pi}{12k}\right)^{5/2} C_{\frac{3}{2}}\left(\frac{\pi^2}{6k^2} \left(n-\frac{1}{24}\right)\right),
\end{equation} where \(A_k(n)\) is a Kloosterman sum and \(C_\nu(t)\) denotes the \textit{Bessel-Clifford} function, given by \begin{equation}
    C_\nu(t) := \sum_{k=0}^\infty \frac{1}{\Gamma(\nu+k+1)}\frac{t^k}{k!} = \begin{cases}
        |t|^{-\frac{\nu}{2}} I_\nu\big(2\sqrt{|t|}\big), \quad t \geq 0, \\
        |t|^{-\frac{\nu}{2}} J_\nu\big(2\sqrt{|t|}\big), \quad t \leq 0.
    \end{cases}
\end{equation} In a recent paper, H.\@ Chan and L.\@ Wang defined the \textit{fractional partition function} for \(\alpha \in \R\) in terms of its generating function, \begin{equation}
    \sum_{n=0}^\infty p_\alpha(n) x^n := \prod_{k=1}^\infty \frac{1}{(1-x^k)^\alpha},
\end{equation} which agrees with the ordinary partition function when \(\alpha = 1\) \cite{chan}. J.\@ Iskander, V.\@ Jain, and V.\@ Talvola derived an infinite series expansion analogous to \eqref{partition-expansion} for \(p_\alpha(n)\) when \(\alpha > 0\) \cite{exactformulae}.

Recently, K.\@ Ono conjectured that the Jensen polynomials \(J_d^{R_\alpha, n}(x)\) of the first term of the analogue of the Hardy-Ramanujan-Rademacher expansion for \(p_\alpha(n)\), \begin{equation}\label{rseries}
    R_\alpha(n) = 2\pi\left(\frac{\pi\alpha}{12}\right)^{\frac{\alpha}{2}+1} C_{\frac{\alpha}{2}+1}\left(\frac{\pi^2\alpha}{6}\left(n - \frac{\alpha}{24}\right)\right),
\end{equation} are hyperbolic for all \(n \geq \alpha/24\). Ono expected a similar result to hold for the first terms of analogous series expansions for generic weakly holomorphic modular forms with poles at the cusp infinity. 

In this paper, we prove Ono's conjecture for \(0 < \alpha < 3/2\) and justify his broader hypothesis by developing a theory of \(\delta\)-variants of hyperbolicity and the Laguerre-Pólya class. More precisely, we say that a polynomial is \textit{\(\delta\)-hyperbolic} if it is identically zero or its roots are all real, simple, and separated by at least \(\delta\), and we say that a function \(f(t)\) belongs to the \textit{\(\delta\)-Laguerre-Pólya} (\(\delta\)-\(\LP\)) class if there exists a sequence of \(\delta\)-hyperbolic polynomials converging locally uniformly to \(f(t)\) in \(\C\). The finite difference operator \(\Delta_{\delta,t}\) is defined by \begin{equation}
    \Delta_{\delta,t} f(t) := \frac{f(t+\delta) - f(t)}{\delta}.
\end{equation} For an arbitrary function \(f(t)\) and \(\delta > 0\), we define the \textit{\(\delta\)-Appell polynomials} associated to \(f\) by \begin{equation}
    A_d^{f,\delta}(t; x) := e^{-x t} \Delta_{\delta,t}^d(e^{x t} f(t)) = \frac{1}{\delta^d}\sum_{k=0}^d \binom{d}{k} (-1)^{d-k} f(t+k\delta) e^{k\delta x},
\end{equation} so that \(A_{d+1}^{f, \delta}(t; x) = e^{\delta x} A_d^{f, \delta}(t + \delta; x) - A_d^{f, \delta}(t; x)\) for all \(d \geq 0\).

Using these definitions, we obtain the following theorem.

\begin{theorem} \label{main-theorem}
Let \(f(t)\) be any function in the \(\delta\)-Laguerre-Pólya class, and let \(d \geq 0\) and \(t_0 \in \R\) be such that \(f(t)\) has no zeros on the interval \([t_0, t_0+d\delta]\). Then \(A_d^{f,\delta}(t_0; x)\) has \(d\) real zeros.
\end{theorem}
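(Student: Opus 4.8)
The plan is to pass to the variable \(y = e^{\delta x}\), which carries \(\R\) bijectively onto \((0,\infty)\). Writing
\[
P_d^{t}(y) := \sum_{k=0}^d \binom{d}{k}(-1)^{d-k} f(t+k\delta)\,y^k,
\]
we have \(A_d^{f,\delta}(t;x) = \delta^{-d} P_d^{t}(y)\), and I set \(P := P_d^{t_0}\). Since \(f\) is nonvanishing on \([t_0,t_0+d\delta]\), the values \(f(t_0+k\delta)\) are all nonzero of one common sign, so \(P\) has degree exactly \(d\) and its coefficients strictly alternate in sign. Hence \(A_d^{f,\delta}(t_0;\cdot)\) has \(d\) real zeros if and only if \(P\) has \(d\) positive zeros; moreover a polynomial with strictly alternating coefficients cannot vanish for \(y\le 0\), so every real zero of \(P\) is automatically positive. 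The whole theorem therefore reduces to the single assertion that \(P\) is hyperbolic.

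First I would reduce to the case that \(f\) is itself a \(\delta\)-hyperbolic polynomial. Writing \(f\) as a locally uniform limit of \(\delta\)-hyperbolic polynomials \(p_m\), the polynomials \(P_m(y)=\sum_k \binom{d}{k}(-1)^{d-k} p_m(t_0+k\delta)y^k\) converge coefficientwise to \(P\), and their leading coefficients \(p_m(t_0+d\delta)\to f(t_0+d\delta)\ne 0\) keep the degree equal to \(d\) in the limit; so once each \(P_m\) is known to be hyperbolic, Hurwitz's theorem forces \(P\) to be hyperbolic, and the common sign of the limiting values \(f(t_0+k\delta)\) again places the roots in \((0,\infty)\).

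The engine for the polynomial case is a discrete Rolle's theorem. If \(g\) is \(\delta\)-hyperbolic with roots \(r_1<\cdots<r_N\), then on each interval \([r_j,\,r_{j+1}-\delta]\)—which is nonempty precisely because \(r_{j+1}-r_j\ge\delta\)—the difference \(g(t+\delta)-g(t)\) takes opposite signs (or a zero) at the two endpoints, so \(\Delta_{\delta,t}g\) has a root there; these \(N-1\) roots exhaust the degree of \(\Delta_{\delta,t}g\), and since consecutive roots lie in \([r_j,r_{j+1}-\delta]\) and \([r_{j+1},r_{j+2}-\delta]\) they are again at least \(\delta\) apart. Thus \(\Delta_{\delta,t}\) preserves \(\delta\)-hyperbolicity and interlaces roots, and it is exactly here that the separation hypothesis is indispensable. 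I would then run the main argument as an induction on \(d\) built on the two identities \(P_d^{t}(y)=y\,P_{d-1}^{t+\delta}(y)-P_{d-1}^{t}(y)\) and \(\frac{d}{dy}P_d^{t}(y)=d\,P_{d-1}^{t+\delta}(y)\). Carrying as inductive hypothesis that, for every admissible base point, \(P_{d-1}^{t}\) and \(P_{d-1}^{t+\delta}\) are hyperbolic with positive simple interlacing roots, one evaluates \(P_d^{t}=y\,P_{d-1}^{t+\delta}-P_{d-1}^{t}\) at the roots of \(P_{d-1}^{t+\delta}\): there it equals \(-P_{d-1}^{t}\), whose sign alternates by interlacing, and combining this alternation with the signs at \(y=0\) and as \(y\to+\infty\) produces \(d\) sign changes on \((0,\infty)\), hence \(d\) positive simple roots.

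The hard part will be establishing and then propagating the correct interlacing between the consecutively shifted polynomials \(P_{d}^{t}\) and \(P_{d}^{t+\delta}\). The base case amounts to the log-concavity \(f(t_0+\delta)^2\ge f(t_0)f(t_0+2\delta)\), which follows from concavity of \(\log|f|\) on any zero-free interval (its second derivative is \(-m/t^2-2\beta-\sum_n (t-t_n)^{-2}\le 0\) for \(f\) of the form \eqref{laguerre-polya-function-form}). The genuine difficulty is the inductive propagation of the \emph{full} interlacing: I must verify not merely that each \(P_{d-1}^{t}\) is hyperbolic but that the two shifted families weave together in the precise order demanded by the sign computation above, and that this order persists under the operation \(A\mapsto yA-B\). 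Pinning down the interlacing direction and showing it survives each step—leveraging the \(\delta\)-separation supplied by the discrete Rolle lemma rather than mere log-concavity—is where I expect the main obstacle to lie.
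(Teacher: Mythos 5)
Your reduction to the polynomial case is sound (it is essentially the paper's own limiting step, via locally uniform convergence and Theorem~\ref{continuity-of-roots}), and both identities \(P_d^{t}(y)=y\,P_{d-1}^{t+\delta}(y)-P_{d-1}^{t}(y)\) and \(\tfrac{d}{dy}P_d^{t}(y)=d\,P_{d-1}^{t+\delta}(y)\) are correct. But the argument stops exactly where the theorem lives: the interlacing of \(P_{d-1}^{t}\) and \(P_{d-1}^{t+\delta}\), its direction, and its propagation under \(A\mapsto yA-B\) are never established, and you say yourself this is ``where I expect the main obstacle to lie.'' This is not a routine verification, and in fact the induction hypothesis as you state it --- simple, strictly interlacing positive roots for every admissible base point --- is false for general \(\delta\)-hyperbolic \(f\). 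Take \(f(t)=t\), \(\delta=1\), \(t_0>0\) (a legitimate instance of the theorem: one root, vacuously \(\delta\)-hyperbolic). Then \(P_d^{t}(y)=(y-1)^{d-1}\bigl((t+d)y-t\bigr)\), so for \(d\geq 3\) the polynomials \(P_{d-1}^{t_0}\) and \(P_{d-1}^{t_0+1}\) share the root \(y=1\); your sign evaluation at that root yields \(0\) instead of an alternation, and \(A_d^{f,1}(t_0;x)\) genuinely has a multiple root at \(x=0\), so no argument producing \(d\) \emph{simple} roots can succeed here. To repair this you would need either a non-strict interlacing statement with careful multiplicity bookkeeping, or to force nondegeneracy by approximating with \(\delta\)-hyperbolic polynomials of degree at least \(d\) --- which is precisely the role of the paper's Lemma~\ref{laguerre-polya-sequence-lemma}, and which your reduction omits (your approximants may be \(f\) itself when \(f\) is a low-degree polynomial).

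It is worth seeing how the paper sidesteps the interlacing-direction problem entirely: it interlaces in the \emph{other} variable. Theorem~\ref{delta-difference-theorem} shows that \(t\mapsto e^{\delta x}f(t+\delta)-f(t)\) is again \(\delta\)-hyperbolic with located roots for every fixed \(x\) (note that your discrete Rolle lemma is only the \(x=0\) case of this, while your recursion carries the weight \(y=e^{\delta x}\)); iterating produces root curves \(\tau_{d,k}(x)\) in the \((t,x)\)-plane with pinned-down limits (Lemma~\ref{main-lemma}), and the hypothesis that \(f\) has no zeros on \([t_0,t_0+d\delta]\) enters only through counting that exactly \(d\) of these curves cross the line \(t=t_0\); since consecutive same-degree curves stay \(\delta\) apart in \(t\), these crossings are distinct, and a degree-\(d\) polynomial in \(y\) with \(d\) distinct real roots is hyperbolic. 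No statement about the relative order in \(y\) of the roots of \(P_{d-1}^{t}\) and \(P_{d-1}^{t+\delta}\) is ever needed. The statement you would have to prove (the roots of \(P_d^{t}\) weaving below those of \(P_d^{t+\delta}\), whose \(d=1\) case is your log-concavity inequality) is essentially a transposed reading of Lemma~\ref{main-lemma}, and I do not see how to obtain it without redoing something equivalent to that lemma; as it stands, your writeup is a plan whose central lemma is missing.
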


\noindent 
Theorem \ref{main-theorem} gives a method for producing functions in the \(\LP\) class.

\begin{theorem} \label{new-LP-functions}
Let \(f(t)\) be any function in the \(\delta\)-Laguerre-Pólya class, and suppose \(t_0 \in \R\) is such that \(f(t)\) has no zeros on \([t_0, \infty)\). Then the function \begin{equation*}
    g(x) := \sum_{k=0}^\infty f(t_0+k\delta)\frac{x^k}{k!}
\end{equation*} is entire and belongs to the Laguerre-Pólya class.
\end{theorem}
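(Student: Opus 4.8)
The plan is to realize $g$ as the locally uniform limit of an explicit sequence of hyperbolic polynomials built from the $\delta$-Appell polynomials of $f$, so that membership in $\LP$ follows at once from the definition. The starting observation is that the Taylor coefficients of $g$ encode the sampled values of $f$: writing $g(x)=\sum_k a_k x^k$ one has $g^{(k)}(0)=f(t_0+k\delta)$, so $a_k=f(t_0+k\delta)/k!$. Being a locally uniform limit of polynomials, $f$ is entire; as is standard for this class we take $f$ to be real on $\R$, and after replacing $f$ by $-f$ if necessary we may assume $f>0$ on $[t_0,\infty)$, so that $f(t_0+k\delta)>0$ for all $k\ge 0$. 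For each $N$ I would introduce the rescaled binomial polynomial
\[
G_N(x):=\sum_{k=0}^N \binom{N}{k} f(t_0+k\delta)\left(\frac{x}{N}\right)^k,
\]
whose coefficient of $x^k$ equals $\binom{N}{k}N^{-k}f(t_0+k\delta)\to f(t_0+k\delta)/k!=a_k$ as $N\to\infty$, since $\binom{N}{k}N^{-k}\to 1/k!$.

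Next I would show each $G_N$ is hyperbolic by connecting it to $A_N^{f,\delta}(t_0;\cdot)$. The substitution $u=e^{\delta x}$ turns $A_N^{f,\delta}(t_0;x)$ into the polynomial $\tilde P_N(u):=\delta^{-N}\sum_{k=0}^N \binom{N}{k}(-1)^{N-k} f(t_0+k\delta)\,u^k$, whose leading coefficient $\delta^{-N}f(t_0+N\delta)$ is nonzero, so $\deg\tilde P_N=N$. By Theorem \ref{main-theorem}, applied with $d=N$ on $[t_0,t_0+N\delta]\subseteq[t_0,\infty)$, the function $A_N^{f,\delta}(t_0;x)$ has $N$ real zeros; under the biholomorphism $x\mapsto e^{\delta x}$ from $\R$ onto $(0,\infty)$ these correspond, with multiplicity, to $N$ positive zeros of $\tilde P_N$, which therefore has all of its zeros in $(0,\infty)$. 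A direct computation gives $G_N(x)=(-1)^N\delta^N\,\tilde P_N(-x/N)$, so the zeros of $G_N$ are precisely $x=-N u_j<0$ where the $u_j$ range over the zeros of $\tilde P_N$. Hence $G_N$ is hyperbolic with only negative zeros, and we may write $G_N(x)=f(t_0)\prod_{j=1}^N\bigl(1+x/r_{j,N}\bigr)$ with all $r_{j,N}>0$.

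The main obstacle is upgrading coefficientwise convergence to locally uniform convergence and concluding that $g$ is entire; the key is a uniform exponential bound on the $G_N$. Comparing the coefficient of $x^1$ in the product and in the defining sum yields $f(t_0)\sum_{j=1}^N r_{j,N}^{-1}=f(t_0+\delta)$, so $\sum_{j=1}^N r_{j,N}^{-1}=f(t_0+\delta)/f(t_0)=:C$ is a constant independent of $N$. Consequently, for every $x\in\C$,
\[
|G_N(x)|=f(t_0)\prod_{j=1}^N\left|1+\frac{x}{r_{j,N}}\right|\le f(t_0)\prod_{j=1}^N\left(1+\frac{|x|}{r_{j,N}}\right)\le f(t_0)\,e^{C|x|},
\]
a bound uniform in $N$ on every compact set. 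By Montel's theorem $\{G_N\}$ is a normal family, and every locally uniform subsequential limit is entire with Taylor coefficients equal to $\lim_N$ of those of $G_N$, namely $a_k$; hence every such limit equals the single entire function $\sum_k a_k x^k$. This simultaneously shows that this series defines an entire function, that it is $g$, and that the full sequence $G_N$ converges to $g$ locally uniformly. Since each $G_N$ is hyperbolic, $g$ lies in $\LP$ by definition, completing the proof. I expect the only delicate points to be the careful bookkeeping of multiplicities under $x\mapsto e^{\delta x}$ and the routine normal-families argument pinning down the limit.
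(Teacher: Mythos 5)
Your proof is correct, but it takes a genuinely different route from the paper's. The paper handles the two claims separately and much more briefly: entirety follows from the Hadamard form \eqref{laguerre-polya-function-form} of \(f\) (as an element of \(\LP\)), which gives \(|f(t_0+k\delta)| \leq |c|e^{\alpha\delta k - \beta\delta^2 k^2}\) and hence an everywhere-convergent series; membership in \(\LP\) is then deduced by citing Jensen's characterization --- \(g \in \LP\) if and only if the Appell polynomials \(A_d^g(0;x)\) are hyperbolic for all \(d\) --- since those Appell polynomials are, up to the substitution \(u = e^{\delta x}\) together with negation and reversal of the variable, exactly the polynomials that Theorem \ref{main-theorem} proves have \(d\) real roots. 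You instead verify the definition of \(\LP\) directly: you build the binomially rescaled polynomials \(G_N\), prove their hyperbolicity from Theorem \ref{main-theorem} by the same exponential substitution, and then --- this is the part the paper never needs --- establish locally uniform convergence \(G_N \to g\) via the uniform bound \(|G_N(x)| \leq f(t_0)e^{C|x|}\), which follows from the clean identity \(\sum_j r_{j,N}^{-1} = f(t_0+\delta)/f(t_0)\), plus Montel's theorem and coefficientwise convergence. In effect you re-prove the relevant direction of Jensen's theorem in this special case. Your argument is longer but self-contained: it uses neither Jensen's theorem nor the Hadamard factorization, and it yields entirety as a byproduct of the normal-families argument rather than as a separate estimate; the paper's proof is a few lines at the cost of invoking two black boxes. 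Both proofs rest on Theorem \ref{main-theorem} in the same way. One convention worth making explicit in your write-up: hyperbolic polynomials (hence \(\delta\)-\(\LP\) functions) are taken here to have real coefficients, which is what licenses your reduction to \(f > 0\) on \([t_0,\infty)\); this matches the paper's normalization in \eqref{laguerre-polya-function-form}.
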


\noindent 
An immediate consequence of Theorem \ref{main-theorem} is the following, which states that the \(\delta\)-Appell polynomials \(A_d^{C_\nu,\delta}(x; t)\) are hyperbolic for suitable \(\nu\), \(\delta\), and \(d\).

\begin{theorem} \label{bessel-clifford-hyperbolicity}
For \(\nu \geq \frac{1}{2}\), \(0 < \delta \leq \frac{\pi^2}{4}\), and \(t \geq 0\), the \(\delta\)-Appell polynomials \(A_d^{C_\nu,\delta}(x; t)\) are hyperbolic for all \(d \geq 0\).
\end{theorem}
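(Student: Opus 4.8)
The plan is to deduce the theorem directly from Theorem~\ref{main-theorem} applied to \(f=C_\nu\) with base point \(t\ge 0\). Two things must then be checked: that \(C_\nu\) lies in the \(\delta\)-\(\LP\) class for the stated \(\nu\) and \(\delta\), and that \(C_\nu\) has no zeros on the interval \([t,t+d\delta]\). The second point is immediate: for \(t\ge 0\) this interval lies in \([0,\infty)\), where \(C_\nu(t)=t^{-\nu/2}I_\nu(2\sqrt t)>0\) and \(C_\nu(0)=1/\Gamma(\nu+1)>0\), since \(I_\nu\) is positive on the positive axis for \(\nu>-1\). In particular \(C_\nu(t+d\delta)\neq 0\), so after the substitution \(y=e^{\delta x}\) the \(\delta\)-Appell polynomial is a genuine polynomial in \(y\) of full degree \(d\), and the \(d\) real zeros in \(x\) produced by Theorem~\ref{main-theorem} correspond to \(d\) positive roots in \(y\), i.e.\ hyperbolicity. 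Hence everything reduces to the membership \(C_\nu\in\delta\text{-}\LP\).

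For this membership I would first record that \(C_\nu\in\LP\). Using \(C_\nu(-u)=u^{-\nu/2}J_\nu(2\sqrt u)\) for \(u\ge 0\), the zeros of \(C_\nu\) are precisely \(t_m=-j_{\nu,m}^2/4\), where \(0<j_{\nu,1}<j_{\nu,2}<\cdots\) are the positive zeros of \(J_\nu\); these are all real and negative. Since \(|t_m|\sim (m\pi)^2/4\) we have \(\sum_m |t_m|^{-1}<\infty\), so \(C_\nu\) is an entire function of order \(1/2\) with the genus-zero Hadamard factorization \(C_\nu(t)=\tfrac{1}{\Gamma(\nu+1)}\prod_{m\ge 1}\bigl(1+4t/j_{\nu,m}^2\bigr)\), which is of the form~\eqref{laguerre-polya-function-form}. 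Its partial products \(P_N(t)=\tfrac{1}{\Gamma(\nu+1)}\prod_{m=1}^N(1+4t/j_{\nu,m}^2)\) converge locally uniformly to \(C_\nu\), and each \(P_N\) has exactly the simple real zeros \(t_1,\dots,t_N\). Thus it suffices to show that consecutive zeros of \(C_\nu\) are separated by at least \(\delta\): then every \(P_N\) is \(\delta\)-hyperbolic and \(C_\nu\in\delta\text{-}\LP\) straight from the definition.

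The main obstacle is this separation estimate, which is exactly where the hypotheses \(\nu\ge 1/2\) and \(\delta\le\pi^2/4\) enter. The gap between consecutive zeros is
\begin{equation*}
 t_m-t_{m+1}=\tfrac14\bigl(j_{\nu,m+1}^2-j_{\nu,m}^2\bigr)=\tfrac14\,(j_{\nu,m+1}-j_{\nu,m})(j_{\nu,m+1}+j_{\nu,m}),
\end{equation*}
so I need lower bounds on both the spacings and the magnitudes of the Bessel zeros. I would invoke two classical facts from Sturm-comparison theory for the Bessel equation: for \(\nu\ge 1/2\) consecutive positive zeros satisfy \(j_{\nu,m+1}-j_{\nu,m}\ge\pi\), and \(j_{\nu,m}\) is increasing in \(\nu\), so that \(j_{\nu,m}\ge j_{1/2,m}=m\pi\) (using \(J_{1/2}(z)=\sqrt{2/\pi z}\,\sin z\)). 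Combining these gives \(j_{\nu,m+1}^2-j_{\nu,m}^2\ge \pi\cdot 2j_{\nu,1}\ge 2\pi^2\), hence \(t_m-t_{m+1}\ge\pi^2/2\ge\pi^2/4\ge\delta\) for every \(m\). This comfortably establishes the separation, completes the proof that \(C_\nu\in\delta\text{-}\LP\), and thereby finishes the theorem through Theorem~\ref{main-theorem}.
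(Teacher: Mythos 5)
Your proposal is correct and follows essentially the same route as the paper: both reduce to Theorem~\ref{main-theorem} by writing \(C_\nu(t)=\frac{1}{\Gamma(\nu+1)}\prod_{n\ge 1}\bigl(1+4t/j_{\nu,n}^2\bigr)\), using the classical spacing \(j_{\nu,n+1}-j_{\nu,n}\ge\pi\) for \(\nu\ge 1/2\) to conclude that the (negative, simple) zeros of \(C_\nu\) are separated by more than \(\pi^2/4\ge\delta\), and noting that all zeros lie off \([t,t+d\delta]\) when \(t\ge 0\). You simply make explicit some steps the paper leaves implicit (the partial products as the \(\delta\)-hyperbolic approximating sequence, and the passage from real zeros in \(x\) to hyperbolicity via \(y=e^{\delta x}\)), which is fine.
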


\noindent
This theorem directly implies Ono's conjecture for \(R_\alpha(n)\) given in equation \eqref{laguerre-polya-function-form} when \(0 < \alpha \leq 3/2\). 

\begin{corollary} \label{ono-conjecture-corollary}
The Jensen polynomials \(J_d^{R_\alpha, n}(x)\) are hyperbolic for \(0 < \alpha < 3/2\), \(n \geq \frac{\alpha}{24}\), and \(d \geq 0\). In particular, the Jensen polynomials associated with the first term of the Hardy-Ramanujan-Rademacher expansion for the partition function are always hyperbolic.
\end{corollary}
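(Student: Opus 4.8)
The plan is to deduce the corollary directly from Theorem~\ref{bessel-clifford-hyperbolicity} by choosing the parameters so that the sequence \(\{R_\alpha(n+k)\}_{k\ge 0}\) becomes a sampling of a single Bessel-Clifford function along an arithmetic progression. Concretely, I would set \(\nu = \tfrac{\alpha}{2}+1\), \(\delta = \tfrac{\pi^2\alpha}{6}\), and \(t_0 = \tfrac{\pi^2\alpha}{6}\bigl(n-\tfrac{\alpha}{24}\bigr)\). Then shifting \(n\) by \(k\) shifts the argument of \(C_\nu\) by exactly \(k\delta\), so that
\begin{equation*}
R_\alpha(n+k) = 2\pi\left(\frac{\pi\alpha}{12}\right)^{\nu} C_\nu(t_0 + k\delta) \qquad \text{for all } k \ge 0.
\end{equation*}
The first thing to verify is that these choices meet the hypotheses of Theorem~\ref{bessel-clifford-hyperbolicity}: for \(0<\alpha<3/2\) we have \(\nu = \tfrac{\alpha}{2}+1 > \tfrac12\), and \(0 < \delta = \tfrac{\pi^2\alpha}{6} \le \tfrac{\pi^2}{4}\) (with equality only at \(\alpha=3/2\)); and for \(n \ge \tfrac{\alpha}{24}\) we get \(t_0 \ge 0\).

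Next I would reconcile the Jensen polynomial with the \(\delta\)-Appell polynomial. Since the prefactor \(2\pi(\pi\alpha/12)^{\nu}\) is a positive constant independent of \(k\),
\begin{equation*}
J_d^{R_\alpha, n}(x) = 2\pi\left(\frac{\pi\alpha}{12}\right)^{\nu}\sum_{k=0}^d \binom{d}{k} C_\nu(t_0 + k\delta)\, x^k,
\end{equation*}
so \(J_d^{R_\alpha,n}\) is hyperbolic if and only if \(P(x) := \sum_{k=0}^d \binom{d}{k} C_\nu(t_0+k\delta)\,x^k\) is hyperbolic. On the other hand, expanding the definition of the \(\delta\)-Appell polynomial and collecting the sign \((-1)^{d-k} = (-1)^{d+k}\) gives
\begin{equation*}
\delta^d A_d^{C_\nu,\delta}(t_0; x) = \sum_{k=0}^d \binom{d}{k}(-1)^{d-k} C_\nu(t_0+k\delta)\, e^{k\delta x} = (-1)^d\, P\!\left(-e^{\delta x}\right),
\end{equation*}
so the \(\delta\)-Appell polynomial is, up to the nonzero constant \((-1)^d\delta^{-d}\), the polynomial \(P\) precomposed with the substitution \(x \mapsto -e^{\delta x}\).

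The last step is to transport the conclusion of Theorem~\ref{bessel-clifford-hyperbolicity} across this substitution. The theorem supplies \(d\) real zeros \(x_1,\dots,x_d\) of \(A_d^{C_\nu,\delta}(t_0;x)\); since \(x \mapsto e^{\delta x}\) is an increasing bijection from \(\R\) onto \((0,\infty)\), the numbers \(-e^{\delta x_j}\) are \(d\) negative reals at which \(P\) vanishes. As this accounts for \(d\) roots of a polynomial of degree at most \(d\), it forces \(\deg P = d\) with all roots real, so \(P\), and hence \(J_d^{R_\alpha,n}\), is hyperbolic; specializing \(\alpha=1\) then recovers the statement for the ordinary partition function. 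The one genuinely delicate point is precisely this transfer: \(A_d^{C_\nu,\delta}(t_0;x)\) is an \emph{exponential} polynomial in \(x\) rather than an ordinary polynomial, so I expect the main care to go into arguing that its real zeros correspond bijectively, and with matching multiplicities (the substitution being analytic with nowhere-vanishing derivative), to negative roots of \(P\), together with tracking the sign twist \((-1)^{d-k}\) so that positive values of \(y = e^{\delta x}\) become the negative roots of the Jensen polynomial.
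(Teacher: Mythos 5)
Your proof is correct and follows essentially the same route as the paper: the paper likewise feeds \(\nu = \tfrac{\alpha}{2}+1\), \(\delta = \tfrac{\pi^2\alpha}{6}\), \(t_0 = \tfrac{\pi^2\alpha}{6}\bigl(n-\tfrac{\alpha}{24}\bigr)\) into Theorem~\ref{bessel-clifford-hyperbolicity} (phrased there as the statement that \(A_d^{R_\alpha,1}(n;x)\) has \(d\) real roots) and then converts the real zeros of this exponential polynomial into negative real roots of \(J_d^{R_\alpha,n}(x)\) via the substitution \(x \mapsto -e^{\pm x}\). If anything, your write-up is the more careful one: your identity \(\delta^d A_d^{C_\nu,\delta}(t_0;x) = (-1)^d P\bigl(-e^{\delta x}\bigr)\) is stated correctly and you address the multiplicity transfer, whereas the paper's displayed relation \(e^{dx}J_d^{R_\alpha,n}(-e^{-x}) = A_d^{R_\alpha,1}(n;x)\) has a harmless reflection/sign slip (the coefficient sequence comes out reversed), though the argument is otherwise identical.
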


\subsection*{Acknowledgements}
The authors would like to thank Ken Ono, Larry Rolen, and Ian Wagner for their guidance. The research was supported by the generosity of the Asa Griggs Candler Fund, the National Security Agency under grant H98230-19-1-0013, and the National Science Foundation under grants 1557960 and 1849959.


\section{Preliminaries}

The motivation for the proof of our main result is derived in large part by the observation that the discrete difference operator \(\Delta_{\delta,t}\) preserves \(\delta\)-hyperbolicity. This is stated precisely in next theorem.

\begin{theorem} \label{delta-difference-theorem}
    Let \(\delta > 0\) and \(x \in \R\), and suppose that \(f(t)\) is a \(\delta\)-hyperbolic polynomial with roots \(\{t_k\}_1^d\) arranged in descending order. Then the polynomial \(g(t) := e^{\delta x} f(t+\delta) - f(t)\) is \(\delta\)-hyperbolic. Moreover, for all \(1 \leq k \leq d-1\), \(g(t)\) has a root in the interval \([t_{k+1}, t_k-\delta]\), and if \(x > 0\), then \(g(t)\) has a root in \((-\infty, t_d-\delta)\), whereas if \(x < 0\), then \(g(t)\) has a root in \((t_1, \infty)\).
\end{theorem}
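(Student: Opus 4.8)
The plan is to locate the roots of \(g\) directly via the intermediate value theorem, exploiting the fact that at the endpoints of each target interval one of the two terms defining \(g\) vanishes. Write \(f(t) = c\prod_{k=1}^d (t - t_k)\) with \(c \neq 0\) and \(t_1 > t_2 > \cdots > t_d\) satisfying \(t_k - t_{k+1} \geq \delta\). The first observation is that \(g(t_{k+1}) = e^{\delta x} f(t_{k+1}+\delta)\), since the \(-f(t)\) term dies at the root \(t_{k+1}\), while \(g(t_k - \delta) = -f(t_k - \delta)\), since the \(e^{\delta x} f(t+\delta)\) term dies because \(f(t_k) = 0\). Thus the endpoints of the target interval \([t_{k+1}, t_k - \delta]\) are precisely the two places where \(g\) collapses to a single, sign-controllable term.

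Next I would read off the signs from the sign alternation of \(f\) between consecutive roots. On \((t_{k+1}, t_k)\) one has \(\sign f = (-1)^k \sign c\), and by the separation hypothesis both \(t_{k+1}+\delta\) and \(t_k - \delta\) lie in \([t_{k+1}, t_k]\). Since \(e^{\delta x} > 0\) regardless of the sign of \(x\), this yields \(\sign g(t_{k+1}) = (-1)^k \sign c\) and \(\sign g(t_k - \delta) = (-1)^{k+1}\sign c\), which are opposite; the intermediate value theorem then produces a root of \(g\) in \([t_{k+1}, t_k - \delta]\) for each \(1 \leq k \leq d-1\). When the separation is exactly \(\delta\) the interval collapses to a point and the same computation forces that point to be a root, so the boundary cases cause no trouble. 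Because the located roots satisfy \(r_k \geq t_{k+1}\) and \(r_{k+1} \leq t_{k+1}-\delta\), these \(d-1\) roots are automatically separated by at least \(\delta\).

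For the remaining root I would compare a boundary value of \(g\) against its leading behavior at infinity, which is where the sign of \(x\) finally enters, through the leading coefficient \(c(e^{\delta x}-1)\) of \(g\). This coefficient is nonzero exactly when \(x \neq 0\), so that \(\deg g = d\). For \(x > 0\) one has \(e^{\delta x}-1 > 0\); comparing \(\sign g(t_d - \delta) = (-1)^{d+1}\sign c\) (computed as above, now with \(t_d - \delta\) below the smallest root) against \(\sign g(t) \to (-1)^d \sign c\) as \(t \to -\infty\) gives a sign change, hence a root in \((-\infty, t_d - \delta)\). For \(x < 0\) the symmetric comparison of \(\sign g(t_1) = \sign c\) against \(\sign g(t) \to -\sign c\) as \(t \to +\infty\) produces a root in \((t_1, \infty)\). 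A short check shows this extra root is more than \(\delta\) from its neighbor, and since the \(d\) roots lie in disjoint regions they are distinct and exhaust the degree-\(d\) polynomial \(g\); thus \(g\) is \(\delta\)-hyperbolic. When \(x = 0\) the degree drops to \(d-1\), there is no extra root, and the \(d-1\) interior roots already account for all of \(g\).

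The main obstacle is the bookkeeping of signs: one must keep straight the always-positive factor \(e^{\delta x}\) sitting inside \(g\) versus the factor \(e^{\delta x}-1\) governing the leading term, and track the alternation of \(\sign f\) across the roots, all while confirming that the degenerate case of exact separation \(\delta\)—where a localization interval shrinks to a point—still places a genuine root there and preserves the \(\delta\)-separation of the root set. Matching the count of located roots to \(\deg g\) in each of the cases \(x>0\), \(x<0\), and \(x=0\) is exactly what upgrades ``at least \(d\) (or \(d-1\)) real roots'' to full \(\delta\)-hyperbolicity.
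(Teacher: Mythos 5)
Your proof is correct and follows essentially the same route as the paper: evaluate \(g\) at the shifted roots of \(f\) (where one of the two terms vanishes), use the sign alternation of \(f\) together with the intermediate value theorem to place roots in the intervals \([t_{k+1}, t_k-\delta]\), and then account for the extra root when \(x \neq 0\) via the degree of \(g\). Your treatment of the leading coefficient \(c(e^{\delta x}-1)\) and the sign at infinity just makes explicit what the paper leaves implicit in its final case analysis.
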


\begin{proof}
If \(f(t)\) is constant, then \(g(t)\) is constant and hence \(\delta\)-hyperbolic. Otherwise, let \(\sigma\) be the sign of the leading coefficient of \(f(t)\). Then for all \(k\), we have \(-\sigma(-1)^k g(t_k) = -\sigma(-1)^k e^{\delta x}f(t_k+\delta) \geq 0\) and \(-\sigma(-1)^k g(t_k-\delta) = \sigma(-1)^k f(t_k-\delta) \geq 0\). Hence, there exist \(\{s_k\}_1^{d-1}\) such that \(g(s_k) = 0\) and \(t_{k+1} \leq s_k \leq t_k - \delta\) for \(1 \leq k \leq d - 1\). If \(x = 0\), then the degree of \(g(t)\) is \(d - 1\) and the \(s_k\) enumerate all the roots of \(g(t)\). If \(x \neq 0\), then there exists a \(s_d\) with either \(s_d < t_d - \delta\) in the case \(x > 0\) or \(s_d > t_1\) in the case \(x < 0\) such that \(g(s_d) = 0\), completing the proof.
\end{proof}

\begin{center}
\begin{figure}[]
  \input{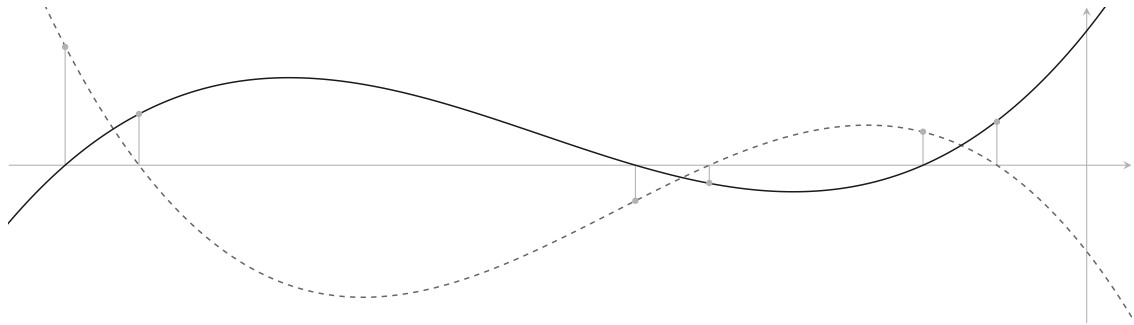}
  \caption{The polynomials \(e^{\delta x}f(t+\delta)\), shown as a solid line, and \(-f(t)\), shown as a dashed line, for a particular choice of \(x\), \(\delta\), and \(f(t)\). Note that at the points where one of the polynomials is zero, the sign of \(g(t) := e^{\delta x}f(t+\delta) - f(t)\) is easily determined.}
  \label{fig:delta-difference-theorem}
\end{figure}
\end{center}

\noindent
By applying this theorem \(d\) times, one sees that for fixed \(x \in \R\), the polynomials \(A_d^{f,\delta}(t; x)\) in \(t\) are \(\delta\)-hyperbolic with roots interlacing those of \(A_{d-1}^{f,\delta}(t; x)\) in a specific way.

Many properties of \(\delta\)-hyperbolic polynomials carry over to functions in the \(\delta\)-\(\LP\) class. For example, the following corollary to Hurwitz's theorem states that functions in the \(\delta\)-\(\LP\) class have all real zeros separated by at least \(\delta\). In particular, this implies an analogue of Theorem \ref{delta-difference-theorem} for the \(\delta\)-\(\LP\) class---a result which does not hold for arbitrary functions with zeros separated by \(\delta\).

\begin{theorem}[A.\@ Hurwitz]
Let \(\{f_n(t)\}_0^\infty\) be a sequence of entire functions that converge locally uniformly to a holomorphic function \(f(t)\) which is not identically zero, and suppose that \(f(t)\) has a zero of multiplicity \(m\) at \(t_0\). Then for every \(\rho > 0\), there exists an \(N\) such that for all \(n \geq N\), \(f_n(t)\) has \(m\) zeroes in the disk \(|t - t_0| < \rho\), counting multiplicity.
\end{theorem}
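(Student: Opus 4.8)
The plan is to prove this via Rouché's theorem, reducing the count of zeros of \(f_n\) inside a circle to that of \(f\). The first step is a reduction: since \(f\) is holomorphic and not identically zero, its zeros are isolated, so after shrinking \(\rho\) if necessary I may assume that \(t_0\) is the \emph{only} zero of \(f\) in the closed disk \(\overline{D} := \{\,|t - t_0| \le \rho\,\}\). In particular \(f\) does not vanish on the boundary circle \(C := \{\,|t - t_0| = \rho\,\}\), so by compactness of \(C\) the quantity \(\varepsilon := \min_{t \in C} |f(t)|\) is strictly positive.

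The second step passes from local uniform convergence to the hypothesis of Rouché's theorem on \(C\). Because \(C\) is compact, \(f_n \to f\) uniformly on \(C\); hence there is an \(N\) such that \(|f_n(t) - f(t)| < \varepsilon \le |f(t)|\) for all \(t \in C\) and all \(n \ge N\). In particular \(f_n\) is nonvanishing on \(C\) for \(n \ge N\), so the zero count in the next step is well defined.

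The third step applies Rouché's theorem: the inequality \(|f_n - f| < |f|\) on \(C\) forces \(f_n\) and \(f\) to have the same number of zeros inside \(C\), counted with multiplicity. Since \(f\) has exactly \(m\) zeros there (the order-\(m\) zero at \(t_0\), its only zero in \(\overline{D}\)), so does \(f_n\) for every \(n \ge N\), which is the assertion. Equivalently, one can invoke the argument principle: writing \(Z_n = \frac{1}{2\pi i}\oint_C \frac{f_n'(t)}{f_n(t)}\,dt\) and using that \(f_n' \to f'\) locally uniformly (by the Weierstrass theorem on differentiating locally uniform limits) together with the lower bound on \(|f_n|\) along \(C\), the integrands converge uniformly on \(C\), so \(Z_n \to \frac{1}{2\pi i}\oint_C \frac{f'(t)}{f(t)}\,dt = m\); as each \(Z_n\) is a nonnegative integer, \(Z_n = m\) for all large \(n\).

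The content of the theorem is elementary once these pieces are in place, so I do not expect a genuine obstacle; the only point requiring care is the opening reduction. The literal phrase ``for every \(\rho > 0\)'' is correct precisely when \(t_0\) is \(f\)'s unique zero in \(\overline{D}\), which is why isolatedness of zeros is invoked to replace a given \(\rho\) by a small enough one—harmless for the intended application, where \(\rho\) is taken small to localize a single zero. The secondary technical point is ensuring \(f\) stays bounded away from \(0\) on \(C\), so that the strict inequality required by Rouché holds for all large \(n\); this is exactly what compactness of \(C\) and nonvanishing of \(f\) on \(C\) provide.
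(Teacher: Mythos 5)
Your proof is correct, but there is nothing in the paper to compare it against: the paper states this result as a classical theorem of Hurwitz without proof, using it as a black box to derive the corollary that follows (that zeros of nonzero functions in the \(\delta\)-\(\LP\) class are real, simple, and separated by at least \(\delta\)). Your Rouch\'e argument is the standard proof and is complete: the reduction to a disk in which \(t_0\) is the only zero of \(f\), the lower bound \(\varepsilon = \min_{t \in C}|f(t)| > 0\) from compactness and nonvanishing on \(C\), and the uniform estimate \(|f_n - f| < \varepsilon \le |f|\) on \(C\) for \(n \ge N\) are exactly the needed ingredients, and the argument-principle variant you sketch is also sound. You are moreover right to flag that the literal quantifier ``for every \(\rho > 0\)'' in the statement is loose: if the disk of radius \(\rho\) contains zeros of \(f\) other than \(t_0\) (or a zero on its boundary), then \(f_n\) will have more than \(m\) zeros there for large \(n\), so ``exactly \(m\)'' can only be asserted after shrinking \(\rho\); equivalently, the statement as written is valid for all \(\rho\) only under the reading ``at least \(m\) zeroes.'' This looseness is harmless for the paper's application, since the corollary invokes the theorem only on small disks (radius \(\delta/2\) around a putative multiple zero, and radius \((\delta-\varepsilon)/2\) around two nearby simple zeros) and only uses lower bounds on the zero counts to reach a contradiction.
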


\begin{corollary}
If \(f(t)\) is a nonzero function in the \(\delta\)-\(\LP\) class, then the zeros of \(f(t)\) are all real, simple, and separated by at least \(\delta\).
\end{corollary}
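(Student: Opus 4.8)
The plan is to apply Hurwitz's theorem to a sequence of \(\delta\)-hyperbolic polynomials \(\{f_n\}_0^\infty\) converging locally uniformly to \(f\), which exists by the definition of the \(\delta\)-\(\LP\) class. Since each \(f_n\) is holomorphic and the convergence is locally uniform, \(f\) is entire; being nonzero, its zeros are isolated, so each can be isolated inside a small disk on which Hurwitz applies. I would then establish the three claimed properties—reality, simplicity, and \(\delta\)-separation—by showing that a violation of any of them would force two or more zeros of \(f_n\), counted with multiplicity, to crowd into a region too small to be compatible with the defining property of \(\delta\)-hyperbolicity (all roots real, simple, and at pairwise distance \(\geq \delta\)).

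For reality, suppose \(t_0\) is a zero of \(f\) with \(\mathrm{Im}(t_0) \neq 0\). I would pick \(\rho < |\mathrm{Im}(t_0)|\) so that the disk \(|t - t_0| < \rho\) is disjoint from \(\R\). By Hurwitz, for large \(n\) the polynomial \(f_n\) has a zero in this disk, contradicting that all zeros of \(f_n\) are real. Hence every zero of \(f\) is real.

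For simplicity and separation, having established reality, I would argue that no two zeros of \(f\), counted with multiplicity, can lie within distance \(< \delta\). First, suppose \(f\) has a real zero \(a\) of multiplicity \(m \geq 2\); choosing \(\rho < \delta/2\) small enough that \(a\) is the only zero of \(f\) in \(|t - a| \leq \rho\), Hurwitz gives at least \(m \geq 2\) zeros of \(f_n\) in \(|t - a| < \rho\) for large \(n\). Any two of these lie within distance \(2\rho < \delta\), so either two coincide (violating simplicity of \(f_n\)) or they are distinct but closer than \(\delta\) (violating separation), a contradiction. Next, suppose \(f\) has distinct real zeros \(a < b\) with \(b - a < \delta\); choosing \(\rho\) with \(0 < \rho < \min\{(b-a)/2,\ (\delta - (b-a))/2\}\) and small enough to isolate \(a\) and \(b\), Hurwitz produces a zero \(\alpha_n\) of \(f_n\) in \(|t - a| < \rho\) and a zero \(\beta_n\) in \(|t - b| < \rho\). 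These are distinct since the disks are disjoint, yet \(|\alpha_n - \beta_n| < (b - a) + 2\rho < \delta\), again contradicting the \(\delta\)-separation of the roots of \(f_n\). Together, the two cases show every zero of \(f\) is simple and consecutive zeros differ by at least \(\delta\).

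The argument is essentially a bookkeeping application of Hurwitz, so there is no single deep obstacle; the point requiring care is the uniform treatment of multiplicity—reading ``simple and separated by \(\delta\)'' correctly as ``at most one zero, counted with multiplicity, in any interval of length \(< \delta\)''—and choosing the radii \(\rho\) so that the Hurwitz counts in the relevant disks genuinely force a separation or simplicity violation for \(f_n\), regardless of whether the offending zeros of \(f_n\) happen to coincide or are merely close.
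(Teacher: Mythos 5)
Your proof is correct and takes essentially the same approach as the paper: isolate any offending configuration of zeros of \(f\) in small disks and invoke Hurwitz's theorem to force the approximating \(\delta\)-hyperbolic polynomials \(f_n\) to have either a multiple zero or two zeros closer than \(\delta\), a contradiction. The only minor difference is the reality step, which the paper dispatches by noting that \(f\) lies in the \(\LP\) class and hence has only real zeros, whereas you prove it directly with a Hurwitz disk disjoint from \(\R\) --- a slightly more self-contained variant of the same idea.
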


\begin{proof}
Since \(f(t)\) is in the \(\LP\) class, it must have only real zeros. Now let \(\{f_n(t)\}_1^\infty\) be a sequence of \(\delta\)-hyperbolic polynomials converging locally uniformly to \(f(t)\). To prove simplicity of the zeros, suppose \(t_0\) is a multiple zero of \(f(t)\). Then by Hurwitz's theorem, there exists an \(N \geq 0\) such that for all \(n \geq N\), the polynomials \(f_n(t)\) have two zeros, counting multiplicity, within the disk \(|t - t_0| < \delta/2\), a contradiction. Now suppose \(t_1 < t_2\) are two simple zeros of \(f(t)\) satisfying \(\varepsilon := t_2 - t_1 < \delta\), and set \(\rho := (\delta - \varepsilon)/2\). Again, by Hurwitz's theorem, there exist \(N_1, N_2 \geq 0\) such that for all \(n \geq N_1\), \(f_n(t)\) has a zero in the disk \(|t - t_1| < \rho\), and for all \(n \geq N_2\), \(f_n(t)\) has a zero in the disk \(|t - t_2| < \rho\). Consequently, we find that for \(n \geq \max\{N_1, N_2\}\), \(f_n(t)\) has two zeros separated by less than \(\varepsilon + 2\rho = \delta\), a contradiction.
\end{proof}

In order to avoid extra casework in the proof of our main theorem, we will find it useful to write functions in the \(\delta\)-\(\LP\) class as limits of \(\delta\)-hyperbolic polynomials of increasing degree.

\begin{lemma} \label{laguerre-polya-sequence-lemma}
Let \(f(t)\) be any function in the \(\delta\)-\(\LP\) class. Then there exists a sequence \(\{f_d(t)\}_0^\infty\) of \(\delta\)-hyperbolic polynomials converging locally uniformly to \(f(t)\) with \(\deg{f_{d+1}(t)} > \deg{f_d(t)}\) for all \(d\).
\end{lemma}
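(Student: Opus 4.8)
The plan is to start directly from the definition: fix a sequence \(\{g_n(t)\}\) of \(\delta\)-hyperbolic polynomials converging locally uniformly to \(f(t)\), set \(d_n := \deg g_n\), and split into cases according to whether the degrees \(d_n\) are bounded. If they are \emph{unbounded}, I would simply extract a subsequence \(g_{n_0}, g_{n_1}, \dots\) with \(d_{n_0} < d_{n_1} < \cdots\); this subsequence still converges locally uniformly to \(f\), each term is \(\delta\)-hyperbolic, and relabeling \(f_k := g_{n_k}\) already gives a sequence of the required form with strictly increasing degrees.

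If instead the degrees are \emph{bounded}, say \(d_n \le m\) for all \(n\), then I claim \(f\) must be a polynomial of degree at most \(m\). Since locally uniform convergence of holomorphic functions passes to all derivatives, \(g_n^{(m+1)} \to f^{(m+1)}\) locally uniformly; but \(g_n^{(m+1)} \equiv 0\), so \(f^{(m+1)} \equiv 0\) and \(f\) is a polynomial of degree \(\le m\). Thus it remains to treat the case in which \(f\) is either the zero polynomial or, by the preceding corollary, a genuine \(\delta\)-hyperbolic polynomial whose roots are real, simple, and separated by at least \(\delta\). The task is now to approximate such an \(f\) by \(\delta\)-hyperbolic polynomials of arbitrarily large degree.

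The idea for this remaining case is to pad the degree by appending extra real roots pushed so far out that they do not disturb the limit. Suppose first \(f \not\equiv 0\); for each \(k \ge 1\) put \(r_{k,j} := k^2 + 2(j-1)\delta\) for \(1 \le j \le k\) and define
\[
 f_k(t) := f(t)\prod_{j=1}^{k}\left(1 - \frac{t}{r_{k,j}}\right).
\]
For every \(k\) large enough that \(k^2\) exceeds each root of \(f\) by at least \(\delta\), the roots of \(f_k\) are the roots of \(f\) together with \(r_{k,1} < \cdots < r_{k,k}\): these are all real, the new roots are mutually \(2\delta\)-separated and lie a distance \(\ge \delta\) beyond the roots of \(f\), so \(f_k\) is \(\delta\)-hyperbolic of degree \(\deg f + k\). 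Because \(r_{k,j} \ge k^2\), on any disk \(|t| \le T\) we have \(\sum_{j=1}^k r_{k,j}^{-1} \le k/k^2 \to 0\), which forces \(\prod_{j=1}^k (1 - t/r_{k,j}) \to 1\) uniformly on \(|t| \le T\); hence \(f_k \to f\) locally uniformly. Discarding the finitely many small \(k\) and relabeling gives the desired strictly-increasing-degree sequence. The zero case \(f \equiv 0\) is handled identically by taking \(f_k := \varepsilon_k \prod_{j=1}^k (1 - t/r_{k,j})\) with \(\varepsilon_k \to 0\), whose degrees increase while \(f_k \to 0\) since the products are bounded on compacta.

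The only genuine obstacle is the bounded-degree case, where one must manufacture arbitrarily high-degree \(\delta\)-hyperbolic approximants to a fixed polynomial. The crux is choosing the auxiliary roots to grow fast enough (here like \(k^2\)) that \(\sum_j r_{k,j}^{-1} \to 0\): this is precisely what makes the appended product tend to \(1\) on compact sets while still keeping all roots mutually \(\delta\)-separated. Any slower growth would either destroy convergence to \(f\) or violate the separation required for \(\delta\)-hyperbolicity, so balancing these two constraints is the delicate point of the argument.
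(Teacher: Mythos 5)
Your proof is correct, and its overall architecture is the same as the paper's: split on whether the degrees of the approximating polynomials are bounded, pass to a degree-increasing subsequence in the unbounded case, and in the bounded case pad with auxiliary real roots placed at distance of order \(k^2\) and mutually separated by multiples of \(\delta\), so that the appended product tends to \(1\) locally uniformly. The one genuine difference is \emph{what} gets padded. The paper keeps working with the approximants: it extracts a subsequence \(g_{d_k}\) of some fixed degree \(M\) and multiplies it by \(h_k(t) = \prod_{j=1}^{k}\bigl(1 - t/(k^2u + j\delta)\bigr)\). You instead first identify the limit \(f\) itself as a \(\delta\)-hyperbolic polynomial --- using the corollary to Hurwitz's theorem, together with a derivative argument (\(g_n^{(m+1)} \equiv 0\) forces \(f^{(m+1)} \equiv 0\)) that the paper merely asserts --- and then multiply \(f\) by the padding product, treating \(f \equiv 0\) separately with a vanishing scalar factor. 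Your variant costs one extra input (the Hurwitz corollary), but it buys a cleaner verification of \(\delta\)-hyperbolicity: the roots of your \(f_k\) are exactly the roots of \(f\) together with the explicitly placed padding roots, with all separations visible by construction. In the paper's version, the product \(h_k g_{d_k}\) is \(\delta\)-hyperbolic only once one checks that the roots of \(g_{d_k}\) --- which are merely known to converge to roots of \(f\), and some of which may escape to infinity when \(\deg f < M\) --- stay at distance at least \(\delta\) from the roots of \(h_k\); the paper leaves this point implicit, while your construction sidesteps it entirely. Both arguments conclude identically by discarding finitely many terms and relabeling.
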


\begin{proof}
Let \(\{g_d(t)\}_0^\infty\) be a sequence of \(\delta\)-hyperbolic polynomials converging locally uniformly to \(f(t)\). If \(\limsup_{d \to \infty} \deg{g_d(t)} = +\infty\), we can construct a subsequence of \(\{g_d(t)\}_0^\infty\) such that the degree is strictly increasing. Otherwise, the degrees of the \(g_d(t)\) are bounded by some \(N \geq 0\), so \(f(t)\) must itself be a polynomial of degree at most \(N\). Suppose \(f(t) = a\prod_{k=1}^n (t-t_k)\) for some \(\{t_k\}_1^n\) arranged in descending order, and let \(u := \max\{t_1, 1\}\). Set \(h_d(t) := \prod_{k=1}^d(1 - t/(d^2u+k\delta))\), and observe that \((1 - t/d^2)^d < \prod_{k=1}^d(1 - t/(d^2u+k\delta)) < (1 - t/(d^2u+d\delta))^d\) with the left and right sides converging locally uniformly to \(1\). Now let \(M\) be the smallest degree that occurs infinitely often in \(\{g_d(t)\}_0^\infty\) and \(\{d_k\}_1^\infty\) be the sequence of indices for which \(g_{d_k}(t)\) has degree \(M\). Then \(\{h_k(t)g_{d_k}(t)\}_0^\infty\) is a sequence of \(\delta\)-hyperbolic polynomials of strictly increasing degree converging locally uniformly to \(f(t)\).
\end{proof}

\noindent
In addition to the previous lemma, the following result will be useful in the proof of our main theorem \cite{cucker1989alternate}.

\begin{theorem}\label{continuity-of-roots}
Let \(p(x) = a_0 x^n + a_1 x^{n-1} + \dots + a_n\) be a polynomial in \(\C\) of degree \(n\) and let \(\alpha_1, \dots, \alpha_n\) be its roots. Then for all \(\varepsilon > 0\), there exists a \(\delta > 0\) such that for every polynomial \(q(x) = b_0 x^n + b_1 x^{n-1} + \dots + b_n\), if \(|b_i - a_i| < \delta\) for \(0 \leq i \leq n\), then there are \(\beta_1, \dots, \beta_n \in \mathbb{C}\) such that \(q(x) = \prod_{k=1}^n (x - \beta_i)\) and \(|\alpha_i - \beta_i| < \varepsilon\) for \(0 \leq i \leq n\).  
\end{theorem}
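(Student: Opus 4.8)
The plan is to prove this classical fact by \emph{Rouché's theorem}, localizing the roots of $p$ into small disjoint disks and showing that a sufficiently small coefficient perturbation cannot change the number of zeros inside any one of them.

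First I would pass to the distinct roots. Let $\gamma_1, \dots, \gamma_r$ be the distinct values among $\alpha_1, \dots, \alpha_n$, with multiplicities $m_1, \dots, m_r$ satisfying $\sum_j m_j = n$. Given $\varepsilon > 0$, I choose a radius $\rho$ with $0 < \rho < \varepsilon$ and $\rho$ less than half the minimum distance between distinct roots, so that the closed disks $\overline{D}_j := \{\,|x - \gamma_j| \le \rho\,\}$ are pairwise disjoint. Let $\Gamma := \bigcup_j \partial D_j$ be the union of the bounding circles. Since $\Gamma$ is compact and contains no root of $p$, the quantity $\mu := \min_{x \in \Gamma} |p(x)|$ is strictly positive.

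Next I would produce a uniform perturbation bound on $\Gamma$. Writing $q(x) - p(x) = \sum_{i=0}^n (b_i - a_i) x^{n-i}$ and setting $R := \max_{x \in \Gamma} |x|$, on $\Gamma$ we have $|q(x) - p(x)| \le \delta \sum_{i=0}^n R^{n-i} \le \delta K$, where $K := (n+1)\max(1, R^n)$. I then take $\delta := \min\{\mu/(K+1),\, |a_0|\}$. For this choice, any $q$ with $|b_i - a_i| < \delta$ for all $i$ satisfies the strict inequality $|q(x) - p(x)| < \mu \le |p(x)|$ at every $x \in \Gamma$, and moreover $b_0 \ne 0$, so $q$ genuinely has degree $n$. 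Applying Rouché's theorem on each circle $\partial D_j$ then forces $p$ and $q$ to have equally many zeros inside $D_j$, namely $m_j$. Summing over $j$ accounts for $\sum_j m_j = n$ zeros of $q$ inside $\bigcup_j D_j$; since $q$ has exactly $n$ zeros counted with multiplicity, these are all of them. Finally I would define the labeling by matching the $m_j$ copies of $\gamma_j$ among $\alpha_1, \dots, \alpha_n$ arbitrarily with the $m_j$ zeros of $q$ lying in $D_j$, producing $\beta_1, \dots, \beta_n$ with $q(x) = b_0\prod_{k=1}^n (x - \beta_k)$ and $|\alpha_i - \beta_i| \le \rho < \varepsilon$ for every $i$.

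The only genuinely delicate points are bookkeeping rather than conceptual obstacles: ensuring the perturbed polynomial still has degree exactly $n$ (handled by the extra constraint $\delta \le |a_0|$, which prevents a zero of $q$ from "escaping to infinity"), and verifying that the Rouché counts over the separate disks exhaust all $n$ zeros, so that the matching $\alpha_i \mapsto \beta_i$ is a genuine bijection. Both are immediate once the disks are disjoint and $\delta$ is taken small enough; everything else is the standard Rouché estimate. (A shorter but less self-contained alternative would be to argue by contradiction, extracting a sequence $q_k \to p$ with $\delta_k \to 0$ and invoking Hurwitz's theorem on each disk, but the direct Rouché argument yields the explicit $\delta$ requested in the statement.)
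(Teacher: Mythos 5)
Your proof is correct, but there is nothing in the paper to compare it against: Theorem \ref{continuity-of-roots} is not proved in the paper at all; it is imported as a known result with a citation to \cite{cucker1989alternate}, a note whose very purpose is to give an \emph{alternate} (elementary, non-complex-analytic) proof of the continuity of the roots of a polynomial. Your argument is the classical complex-analytic route: isolate the distinct roots $\gamma_j$ in pairwise disjoint disks of radius $\rho < \varepsilon$, bound the perturbation $|q-p|$ on the bounding circles $\Gamma$ by $\delta K$, and apply Rouch\'e on each circle to preserve the multiplicity count $m_j$ inside each disk. The delicate points are all handled: the strict inequality $\delta K \le \mu K/(K+1) < \mu$ needed for Rouch\'e, the constraint $\delta \le |a_0|$ guaranteeing $\deg q = n$ (in fact this is redundant: if $b_0 = 0$, then $q$ would be a nonzero polynomial of degree at most $n-1$ with $n$ zeros counted with multiplicity, a contradiction, and $q \equiv 0$ is already excluded by the strict inequality on $\Gamma$), and the exhaustion argument showing that the $n$ zeros located in the disks are all of $q$'s zeros, which is what makes the matching $\alpha_i \mapsto \beta_i$ well defined. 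One small point in your favor: you correctly write the factorization as $q(x) = b_0\prod_{k=1}^n(x-\beta_k)$; the statement as printed omits the leading coefficient $b_0$, which is a typo unless $q$ is monic. Compared with the elementary proof in the cited reference, your Rouch\'e argument is shorter and produces an explicit admissible $\delta$, at the cost of invoking complex analysis.
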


\section{Proof of Results}

With these observations, we are able to prove the following lemma, which describes the roots of \(A_d^{f,\delta}(t; x)\) as continuous curves in the \(t\)-\(x\) plane when \(f(t)\) is a \(\delta\)-hyperbolic polynomial. The lemma also states that the curves associated to successive \(\delta\)-Appell polynomials interlace each other, and it gives some of their limiting properties.

\begin{center}
\begin{figure}[]
  \input{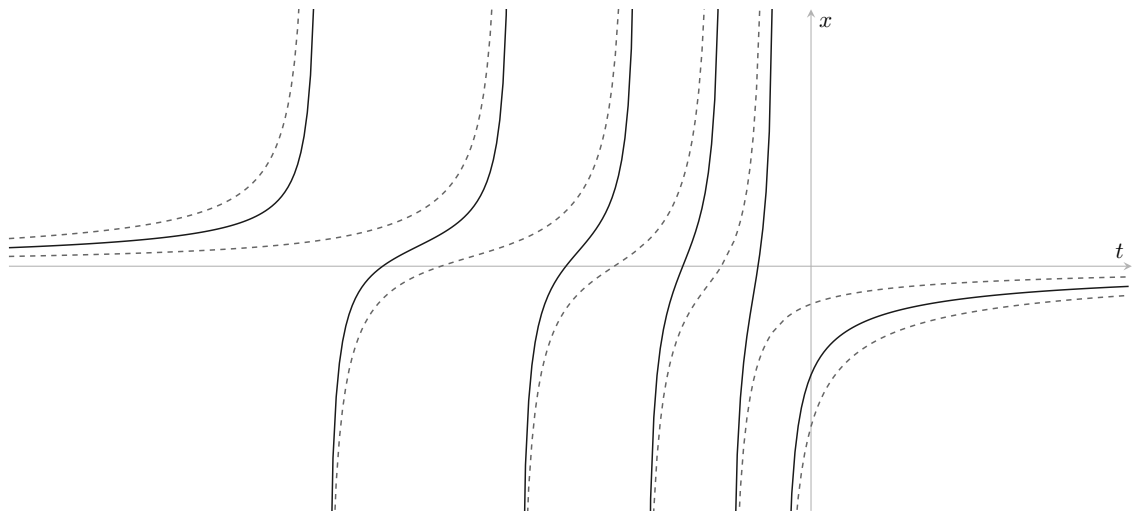}
  \caption{The zeros of the \(\frac{\pi^2}{6}\)-Appell polynomials for \(L_5^{3/2}(-\frac{x}{5})\) as given in \eqref{laguerre-polynomials}, which approximates the Bessel-Clifford function that occurs in \(R_1(n)\) in \eqref{rseries}. The zeros of the degree \(1\) polynomial are shown as solid lines, and the zeros of the degree \(2\) polynomial are shown as dashed lines.}
  \label{fig:main-lemma}
\end{figure}
\end{center}

\begin{lemma} \label{main-lemma}
Let \(f(t)\) be a \(\delta\)-hyperbolic polynomial of degree \(n\), and let \(\{t_k\}_1^n\) denote the roots of \(f(t)\) listed in descending order. Then for all \(0 \leq d \leq n\), there exist continuous functions \(\{\tau_{d,k}\}_1^{n+d}\) with \(\tau_{d,k} : (-\infty, 0) \to \R\) for \(1 \leq k \leq d\), \(\tau_{d,k} : \R \to \R\) for \(d+1 \leq k \leq n\), and \(\tau_{d,k} : (0, \infty) \to \R\) for \(n+1 \leq k \leq n+d\) such that the following statements hold: \begin{itemize}
    \item[\textnormal{(i)}] For all \(x, t \in \R\), we have \(A_d^{f,\delta}(t; x) = 0\) if and only if \(t = \tau_{d,k}(x)\) for some \(1 \leq k \leq n+d\).
    
    \smallskip
    \item[\textnormal{(ii)}] For all \(x\) and \(1 \leq k \leq n+d-1\), we have \(\tau_{d,k+1}(x) + \delta \leq \tau_{d-1,k}(x) \leq \tau_{d,k}(x)\).
    
    \smallskip
    \item[\textnormal{(iii)}] We have \begin{equation*}
        \lim_{x \to -\infty} \tau_{d,k}(x) = t_k \quad \text{and} \quad \lim_{x \to 0^-} \tau_{d,k}(x) = +\infty
    \end{equation*} for \(1 \leq k \leq d\), \begin{equation*}
        \lim_{x \to -\infty} \tau_{d,k}(x) = t_k \quad \text{and} \quad \lim_{x \to +\infty} \tau_{d,k}(x) = t_{k-d} - d\delta
    \end{equation*} for \(d+1 \leq k \leq n\), and \begin{equation*}
        \lim_{x \to 0^+} \tau_{d,k}(x) = -\infty \quad \text{and} \quad \lim_{x \to +\infty} \tau_{d,k}(x) = t_{k-d} - d\delta
    \end{equation*} for \(n+1 \leq k \leq n+d\).
\end{itemize}
\end{lemma}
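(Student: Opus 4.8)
The plan is to induct on \(d\), constructing the curves \(\tau_{d,k}\) from those at level \(d-1\) by tracking how the roots of \(A_d^{f,\delta}(t;x) = e^{\delta x}A_{d-1}^{f,\delta}(t+\delta;x) - A_{d-1}^{f,\delta}(t;x)\) move as \(x\) varies. For the base case \(d=0\) I take \(\tau_{0,k}(x) := t_k\) for \(1 \le k \le n\), which are constant and satisfy (i) and (iii) trivially. For the inductive step, fix \(x\) and apply Theorem \ref{delta-difference-theorem} to the \(\delta\)-hyperbolic polynomial \(A_{d-1}^{f,\delta}(\cdot;x)\): this shows \(A_d^{f,\delta}(\cdot;x)\) is \(\delta\)-hyperbolic and places exactly one root of \(A_d^{f,\delta}(\cdot;x)\) in each bracket \([\tau_{d-1,k}(x), \tau_{d-1,k-1}(x) - \delta]\) cut out by consecutive roots of \(A_{d-1}^{f,\delta}(\cdot;x)\), together with one extra root lying above all the others when \(x<0\) and below all the others when \(x>0\) (and no extra root when \(x=0\), where the \(t\)-degree drops from \(n\) to \(n-d\)). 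Labelling the roots in descending order and matching the resulting index ranges against the three sign regimes for \(x\) reproduces exactly the prescribed domains; defining \(\tau_{d,k}(x)\) to be the corresponding root then yields (i), and reading off the brackets yields the interlacing inequalities (ii) on the common domain of the three functions involved.

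For continuity, I would note that \(A_d^{f,\delta}(t;x)\) is jointly real-analytic in \((t,x)\) and that, by \(\delta\)-hyperbolicity, every root is simple; hence \(\partial_t A_d^{f,\delta} \ne 0\) at each root and the implicit function theorem (equivalently, Theorem \ref{continuity-of-roots}, since on each of \((-\infty,0)\) and \((0,\infty)\) the \(t\)-degree is the constant \(n\)) produces, locally, a unique continuous branch through each root. Because the roots stay separated by at least \(\delta\) they never collide, so these branches glue into globally continuous functions on \((-\infty,0)\) and on \((0,\infty)\). To extend the middle curves \(\tau_{d,k}\) for \(d+1 \le k \le n\) continuously across \(x=0\), I would use that for these \(k\) the bracket endpoints \(\tau_{d-1,k}(x)\) and \(\tau_{d-1,k-1}(x)-\delta\) are themselves continuous on all of \(\R\) by induction, so the middle roots remain bounded near \(x=0\); since \(A_d^{f,\delta}(\cdot;0)\) has exactly these \(n-d\) simple roots, a final application of the implicit function theorem at \((\tau_{d,k}(0),0)\) glues the two one-sided branches.

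The limiting statements in (iii) split into the behaviour at \(\pm\infty\) and at \(0^{\pm}\). As \(x \to -\infty\) only the \(j=0\) term of \(A_d^{f,\delta}(t;x) = \delta^{-d}\sum_{j=0}^d \binom{d}{j}(-1)^{d-j} f(t+j\delta) e^{j\delta x}\) survives, so the coefficients converge to those of \(\delta^{-d}(-1)^d f(t)\); as \(x\to+\infty\) the \(j=d\) term dominates, and after dividing by \(e^{d\delta x}\) the coefficients converge to those of \(f(t+d\delta)\). In both cases the \(t\)-degree stays equal to \(n\), so Theorem \ref{continuity-of-roots} forces the \(n\) roots to converge to the roots \(t_k\) of \(f\) and to the roots \(t_k - d\delta\) of \(f(\cdot + d\delta)\), respectively; matching descending orders gives the stated limits \(t_k\) and \(t_{k-d}-d\delta\).

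The main obstacle, and the last point to handle, is the escape to infinity as \(x \to 0^{\pm}\), since here the \(t\)-degree drops by \(d\) and so exactly \(d\) roots must leave every compact set. I would make this precise with the region (argument-principle) form of Hurwitz's theorem: for any large \(M\) the locally uniform convergence \(A_d^{f,\delta}(\cdot;x) \to A_d^{f,\delta}(\cdot;0)\) forces \(A_d^{f,\delta}(\cdot;x)\) to have exactly \(n-d\) zeros in \(|t|<M\) once \(x\) is near \(0\), namely the bounded middle curves, so the remaining \(d\) roots have modulus at least \(M\). For \(x \to 0^-\) these \(d\) escaping roots are the \(d\) largest, and each is bounded below by \(\tau_{d,d+1}(x)+\delta\), which stays finite; hence they tend to \(+\infty\). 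The mirror-image argument for \(x \to 0^+\), where the \(d\) smallest escaping roots are bounded above by \(\tau_{d,n}(x)-\delta\), gives the limit \(-\infty\). This completes the induction.
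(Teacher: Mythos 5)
Your construction and your treatment of the finite limits coincide with the paper's proof: the curves are produced by iterating Theorem \ref{delta-difference-theorem}, continuity follows from simplicity of the roots together with the implicit function theorem, and the limits as \(x \to \pm\infty\) follow from coefficient-wise convergence of (suitably normalized) \(A_d^{f,\delta}(t;x)\) to \((-1)^d f(t)\) and \(f(t+d\delta)\) plus Theorem \ref{continuity-of-roots}. Where you genuinely depart from the paper is the blow-up at \(x \to 0^{\pm}\), which is indeed the crux. The paper proceeds by induction on \(d\): the interlacing (ii) settles \(1 \le k \le d-1\) directly from the inductive hypothesis, and for the one remaining curve \(\tau_{d,d}\) it first shows \(\limsup_{x\to 0^-}\tau_{d,d}(x) = \liminf_{x\to 0^-}\tau_{d,d}(x)\) (otherwise, for a fixed \(t\) between them, \(A_d^{f,\delta}(t;x)\), which is a polynomial in \(e^{\delta x}\), would have infinitely many roots in \(x\)), and then rules out a finite limit \(L\) because continuity would force \(A_d^{f,\delta}(L;0)=0\), giving \(A_d^{f,\delta}(\cdot;0)\) more roots than its degree \(n-d\) permits. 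You instead count zeros: by the argument-principle (counting) form of Hurwitz's theorem, \(A_d^{f,\delta}(\cdot;x)\) has exactly \(n-d\) zeros in \(|t|<M\) for \(x\) near \(0\); these are exhausted by the bounded middle curves, so the other \(d\) roots have modulus at least \(M\), and the interlacing lower (resp.\ upper) bound pins them to \(+\infty\) (resp.\ \(-\infty\)). This handles all \(d\) escaping curves in one stroke rather than one new curve per induction step, and replaces the paper's oscillation-plus-extra-root argument with a single zero-counting step; the price is invoking the stronger counting form of Hurwitz rather than the local form stated in the paper. Both routes are correct.

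One small repair is needed in your escape argument: you bound the escaping roots by \(\tau_{d,d+1}(x)+\delta\) (as \(x \to 0^-\)) and \(\tau_{d,n}(x)-\delta\) (as \(x \to 0^+\)), but when \(d = n\) the middle range \(d+1 \le k \le n\) is empty, so neither bounding curve exists on the relevant side of \(0\). The fix is immediate from (ii): use instead \(\tau_{d,k}(x) \ge \tau_{d-1,d}(x)\) for \(1 \le k \le d\) and \(\tau_{d,k}(x) \le \tau_{d-1,n}(x) - \delta\) for \(n+1 \le k \le n+d\); the level-\((d-1)\) curves \(\tau_{d-1,d}\) and \(\tau_{d-1,n}\) are defined and continuous on all of \(\R\) whenever \(d \le n\), hence bounded near \(x=0\), which is all the argument requires.
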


\begin{proof}
By Theorem \ref{delta-difference-theorem}, we can construct functions \(\{\tau_{d,k}\}_1^{n+d}\) on the correct domains satisfying conditions (i) and (ii). Moreover, because \(A_d^{f,\delta}(t; x)\) is a continuously differentiable function in \(t\) and \(x\) for all \(d\) and \([D_t A_d^{f,\delta}(t; x)]_{t=\tau_{d,k}(x)}\) is nonzero for all \(x\), \(d\), and \(k\), the implicit function theorem implies that the functions \(\tau_{d,k}(x)\) are all continuous on their domains of definition.

Next, observe that the finite limits in (iii) follow from Theorem \ref{continuity-of-roots} since \(\lim_{x \to -\infty} A_d^{f,\delta}(t; x) = (-1)^d f(t)\) and \(\lim_{x \to +\infty} A_d^{f,\delta}(t; x) = f(t+d\delta)\). For the remaining limits, start by noting that if \(d = 0\), then there is nothing to check. Thus, suppose that \(d > 0\) and that the theorem is satisfied for \(d-1\). Then \(\lim_{x \to 0^-} \tau_{d,k}(x) \geq \lim_{x \to 0^-} \tau_{d-1,k}(x) = +\infty\) for \(1 \leq k \leq d-1\) and \(\lim_{x \to 0^+} \tau_{d,k}(x) \leq \lim_{x \to 0^+} \tau_{d-1,k-1}(x) = -\infty\) for \(n+2 \leq k \leq n+d\). We show that \(\lim_{x \to 0^-} \tau_{d,d}(x) = +\infty\). First, note that if \(L_s := \limsup_{x \to 0^-} \tau_{d,d}(x)\) and \(L_i := \liminf_{x \to 0^-} \tau_{d,d}(x)\), then \(L_s = L_i\); otherwise, for any \(L_s < t < L_i\), the Appell polynomial \(A_d^{f,\delta}(t; x)\) would have an infinite number of roots in \(x\). Moreover, if the \(L := \lim_{x \to 0^-} \tau_{d,d}(x)\) were finite, then by continuity of \(A_d^{f,\delta}(t; x)\), we could write \(A_d^{f,\delta}(L; 0) = 0\), implying that \(A_d^{f,\delta}(t; 0)\) has at least \(d-n+1\) roots. Since we know that \(A_d^{f,\delta}(t; 0)\) has exactly \(d-n\) roots, it follows that \(\lim_{x \to 0^-} \tau_{d,d}(x) = +\infty\). Similar reasoning shows that \(\lim_{x \to 0^+} \tau_{d,n+1}(x) = -\infty\), completing the induction.
\end{proof}

We immediately obtain our main theorem, which provides a simple criterion for hyperbolicity of the \(\delta\)-Appell polynomials.

\begin{proof}[Proof of Theorem~\ref{main-theorem}]
Using Lemma \ref{laguerre-polya-sequence-lemma}, there exists a sequence \(\{f_k(t)\}_0^\infty\) of \(\delta\)-hyperbolic polynomials with \(\deg f_{k+1}(t) > \deg f_k(t)\) for all \(k\) converging locally uniformly to \(f(t)\). Consequently, there exists an \(N \geq 0\) such that for all \(k \geq N\), we have \(\deg{f_k(t)} \geq d\). By uniform convergence, we can choose an \(N' \geq N\) such that \(f_k(t)\) has no zeros on \([t_0, t_0+d\delta]\) for \(k \geq N'\). Therefore, by Lemma~\ref{main-lemma}, the \(A_d^{f_k, \delta}(t_0; x)\) have \(d\) real, simple zeros for \(k \geq N'\). Because \(A_d^{f, \delta}(t_0; x) = \lim_{k \to \infty} A_d^{f_k, \delta}(t_0; x)\) coefficient-wise, Theorem \ref{continuity-of-roots} implies that \(A_d^{f,\delta}(t_0; x)\) has \(d\) real zeros.
\end{proof}

\noindent
In order to prove Theorem~\ref{new-LP-functions}, we show that functions in the \(\delta\)-\(\LP\) class do not grow ``too quickly.''

\begin{proof}[Proof of Theorem~\ref{new-LP-functions}]
By \eqref{laguerre-polya-function-form} we can write \begin{equation*}
    f(t_0+u) = ce^{\alpha u - \beta u^2}\prod_{n=1}^\infty \left(1 + \frac{u}{u_n}\right) e^{-\frac{u}{u_n}},
\end{equation*} where \(c \neq 0\) and \(\alpha \in \R\), \(\beta \geq 0\) and \(\{u_n\}_1^\infty\) is some (possibly finite) sequence of positive real numbers. In particular, this implies \begin{align*}
    |f(t_0+k\delta)| &\leq |c|e^{\alpha\delta k - \beta\delta^2 k^2},
\end{align*} so the function \begin{equation*}
    g(t) := \sum_{k=0}^\infty f(t_0+k\delta) \frac{x^k}{k!}
\end{equation*} is entire by the Cauchy-Hadamard theorem. The fact that \(g(t)\) lies in the \(\LP\) class is immediate from Theorem~\ref{main-theorem} using Jensen's theorem for the \(\LP\) class.
\end{proof}

\section{Applications}

In this section, we illustrate Theorem \ref{main-theorem} using several examples. To start, we use our theorem to show that the Jensen polynomials associated with Bessel-Clifford functions are hyperbolic.

\begin{proof}[Proof of Theorem~\ref{bessel-clifford-hyperbolicity}]
For \(\nu > -1\), it holds that all of the zeros of \(C_\nu(t)\) are simple and lie on the negative real axis. For \(\nu \geq 1/2\), we have that adjacent zeros of the Bessel function \(J_\nu(t)\) are separated by at least \(\pi\) \cite{segura2001bounds}. Consequently, setting \(r_k^\nu\) to be the absolute value of the \(k\)th zero of \(C_\nu(t)\) from the right for \(\nu > -1\), we find that \begin{equation*}
    C_\nu(t) = \frac{1}{\Gamma(\nu+1)} \prod_{n=1}^\infty \left(1 + \frac{t}{r_n^\nu}\right)
\end{equation*} with \(r_{n+1}^\nu - r_n^\nu > \pi^2/4\) for all \(n\). Thus, by Theorem~\ref{main-theorem}, the polynomials \(A_d^{C_\nu,\delta}(t; x)\) are hyperbolic for \(0 < \delta \leq \pi^2/4\) and \(t \geq -r_1^\nu\).
\end{proof}

This settles Ono's conjecture that the Jensen polynomials of the first term of the Hardy-Ramanujan-Rademacher expansion for the partition function are always hyperbolic.

\begin{proof}[Proof of Corollary~\ref{ono-conjecture-corollary}]
By Theorem \ref{bessel-clifford-hyperbolicity}, the Appell polynomials \begin{equation*}
    A_d^{R_\alpha, 1}(n; x) = \sum_{k=0}^d \binom{d}{k} (-1)^{d-k} R_\alpha(n+k) e^{kx}
\end{equation*} have \(d\) real roots whenever \(n \geq \alpha/24\). Moreover, using the relation \begin{equation*}
    e^{dx}J_d^{R_\alpha, n}(-e^{-x}) = A_d^{R_\alpha, 1}(n; x),
\end{equation*} we see that each root of \(A_d^{R_\alpha, 1}(n; x)\) corresponds to a unique root of \(J_d^{R_\alpha, n}(x)\). Consequently, the Jensen polynomials \(J_d^{R_\alpha, n}(x)\) are hyperbolic.
\end{proof}

\noindent
The Bessel-Clifford functions represent a typical case of the application of our theorem: in the positive direction they grow exponentially, whereas in the negative direction they oscillate between positive and negative values, with the separations between zeros bounded below by \(\pi^2/4\).

We also provide two exceptional applications. The first will be the family of functions of the form \(e^{-\beta t^2}\) with \(\beta \geq 0\); i.e., the Gaussian distributions. These functions are noteworthy because they belong to every \(\delta\)-\(\LP\) class. Two immediate consequences are that the polynomials \(g_d(x) := \sum_{k=0}^d (-1)^{d-k} \binom{d}{k} e^{-\beta k^2+kx}\) all have \(d\) real roots, and that the function \begin{equation}
    g(x) := \sum_{k=0}^\infty \frac{(-1)^k}{k!} e^{-\beta k^2+kx}
\end{equation} is such that \(g(\log{x})\) belongs to the \(\LP\) class. In fact, using the recursion formula \begin{equation*}
    g_{d+1}(x) = g_d(x) - e^{x-\beta}g_d(x-2\beta)
\end{equation*} and reasoning similar to that used in the proof of Lemma \ref{main-lemma}, one finds that the roots of \(g_d(x)\) are separated by \(2\beta\). From here, the limit formula \begin{equation*}
    g(x) = \lim_{d \to \infty} (-1)^dg_d(x-\log{d})
\end{equation*} yields the same result for \(g(x)\), although \(g(x)\) is not in the \(2\beta\)-\(\LP\) class.

The second example we consider is the reciprocal Gamma function, which satisfies the identity \begin{equation} \label{gamma-product}
    \frac{e^{-\gamma z}}{\Gamma(z)} = z\prod_{n=1}^\infty \left(1 + \frac{z}{n}\right) e^{-\frac{z}{n}}
\end{equation} from \cite{nist}, where \(\gamma\) denotes the Euler-Mascheroni constant. As a consequence, \(1/\Gamma(t)\) is in the \(1\)-\(\LP\) class. This function is significant not only because it represents an ``extreme'' element of the \(1\)-\(\LP\) class, but also because it is essential for the definitions of numerous families of orthogonal polynomials. For example, the associated Laguerre polynomials are defined by \begin{equation} \label{laguerre-polynomials}
    L_d^\nu(-x) := \sum_{k=0}^d \binom{d+\nu}{d-k} \frac{x^k}{k!} = \frac{\Gamma(d+\nu+1)}{d!} \sum_{k=0}^d \binom{d}{k} \frac{1}{\Gamma(\nu+k+1)!} x^k,
\end{equation} and can be normalized so that they coincide with the ordinary Appell polynomials \(A_d^{C_\nu}(0; x)\). An alternative definition that is more in line with the theme of this paper associates the Laguerre polynomials with the Jensen polynomials of the reciprocal Gamma function: \begin{equation*}
    L_d^\nu(-x) = \frac{\Gamma(d+\nu+1)}{d!} J_d^{\frac{1}{\Gamma},\nu+1}(x).
\end{equation*} This allows us to obtain a \(\delta\)-generalization of the Laguerre polynomials; namely, the Jensen polynomials \(J_d^{\gamma_\delta,\nu+1}(x)\) for the functions \(\gamma_\delta(t) := 1/\Gamma(\delta t)\). Our main theorem automatically implies that these Jensen polynomials remain hyperbolic for \(0 < \delta < 1\) in addition to being orthogonal at \(\delta = 1\).

\bibliographystyle{plain}
\bibliography{references}

\Addresses

\end{document}